\documentclass[12pt]{amsart}
\usepackage{amsmath,amssymb,amsbsy,amsfonts,amsthm,latexsym,mathabx,
            amsopn,amstext,amsxtra,euscript,amscd,stmaryrd,mathrsfs,
            cite,array,mathtools,enumerate}
\usepackage{url}
\usepackage[colorlinks,linkcolor=blue,anchorcolor=blue,citecolor=blue,backref=page]{hyperref}
\usepackage{color}
\usepackage{graphics,epsfig}
\usepackage{graphicx}
\usepackage{float}

\usepackage{mathtools}
\usepackage{todonotes}

\hypersetup{breaklinks=true}

\RequirePackage{mathrsfs} \let\mathcal\mathscr

 \usepackage[norefs,nocites]{refcheck}
 
\usepackage[all]{xy}



\newtheorem{theorem}{Theorem}
\newtheorem{thm}[theorem]{Theorem}
\newtheorem{lem}[theorem]{Lemma}

\newtheorem{cor}[theorem]{Corollary}

\numberwithin{equation}{section}
\numberwithin{theorem}{section}
\numberwithin{table}{section}


\newfont{\teneufm}{eufm10}
\newfont{\seveneufm}{eufm7}
\newfont{\fiveeufm}{eufm5}
%
%
\newfam\eufmfam
     \textfont\eufmfam=\teneufm
\scriptfont\eufmfam=\seveneufm
     \scriptscriptfont\eufmfam=\fiveeufm
%
%

%

\def\a{\alpha}

\def\eqref#1{(\ref{#1})}



\def\cX{{\mathcal X}}

\def\ep{{\mathbf{\,e}}_p}

\def\({\left(}
\def\){\right)}
\def\l|{\left|}
\def\r|{\right|}

\def\mand{\qquad \mbox{and} \qquad}

\theoremstyle{definition}

 \def \F{{\mathbb F}}

\def \Z{{\mathbb Z}}

	\def \l{\lambda}
	


\begin{document}
\title{Binomial exponential sums}

\author{Igor E. Shparlinski}
\address{School of Mathematics and Statistics, University of New South Wales,
 Sydney NSW 2052, Australia}
\email{igor.shparlinski@unsw.edu.au}

\author{Jos\'e Felipe Voloch}

\address{
School of Mathematics and Statistics,
University of Canterbury,
Private Bag 4800, Christchurch 8140, New Zealand
}
\email{felipe.voloch@canterbury.ac.nz}

\begin{abstract} We obtain new bounds of exponential sums
modulo a prime $p$  with binomials  $ax^k + bx^n$. In particular, for $k=1$,
we improve  the bound of  Karatsuba (1967) 
from  $O(n^{1/4} p^{3/4})$ to $O\(p^{3/4}  +    n^{1/3}p^{2/3}\)$ for any $n$, 
and then use it to improve the bound of Akulinichev (1965)
from $O(p^{5/6})$ to $O(p^{4/5})$ for   $n | (p-1)$. The result is based on a new bound
on the number of solutions and of degrees of irreducible components of certain equations 
over finite fields. 
\end{abstract}

\keywords{binomial exponential sums,  rational points on curves, factors of polynomials}
\subjclass[2010]{11T06,  11T23, 14G15}

\maketitle

\section{Introduction}

\subsection{Background and motivation}
For a prime $p$ we consider the binomial exponential sums 
$$
S_{k,n}(a,b) =  \sum_{x=0}^{p-1} \ep\(ax^k + bx^n\)
$$ 
(where $\ep\(x\) = e^{2\pi i x/p}$) with positive integers $k$ and $n$ and arbitrary integer coefficients $a$ and $b$.

There are several bounds and applications of such sums which go beyond the classical Weil bound, 
see~\cite{Aku,BCPP1,BCPP2,CP03,CP10,CP11,Kar,Yu} and references therein.  In particular, bounds for such 
binomial sums played a key role in the approach and resolution in~\cite{BCPP1,CoKo,CP11,GKMS}
to   the conjecture of Goresky and Klapper~\cite{GK} and   in the closely related generalised 
 Lehmer conjecture~\cite{BCPP2}; for very recent development and 
 generalisations see~\cite{ACMPPRT, CMPR}. 
 
A standard technique relates bounding these sums to bounding the number of solutions of certain equations 
over finite fields. Previous papers have used the Weil bound, see~\cite{Lor}, when applicable as well as 
elementary bounds coming from Bezout's theorem, in the range where Weil's bound becomes 
trivial, to bound the number of solutions of these equations. Here, we obtain sharper bounds to
the number of solutions of these equations. 

The novelty of our approach consists of a combination of two ideas. First, we use the 
method of~\cite{SV} (and particularly the explicit version for plane curves from~\cite{V}) that give improvements of
the Weil bound for large degrees. Second, and perhaps more importantly, the equations we need
to study are sometimes not irreducible and we need to bound from below the degrees of their irreducible components
and consequently the number of these components.
This is achieved by using ABC-type bounds for solutions of
equations over \emph{function fields} using the methods of~\cite{Vabc}. A connection between irreducible factors and the polynomial 
ABC-results for $f(X)-f(Y)$ where $f(X)$ is a one-variable sparse polynomial in characteristic zero, is due  to 
Zannier~\cite{Z}. 
We have transposed this technique to positive characteristic for the same kind of polynomials in~\cite{ShV}. Here we extend
this to a wider class of polynomials while sharpening the method and give applications to new bounds
of binomial exponential sums $S_{k,n}(a,b)$.  We expect this method to have wider applications.

\subsection{Set-up and some previous  results}

Define 
$$
M_{k,n} = \max_{\substack{a, b \in \Z\\ \gcd(ab,p)=1}} \left|S_{k,n}(a,b)\right|. 
$$

In the special case $k=1$ we set 
$$
M_{n} = M_{1,n}.
$$

We also  recall the bound of 
Karatsuba~\cite[Theorem~1]{Kar}
\begin{equation}
\label{eq:Kar}
M_n \le (n-1)^{1/4} p^{3/4}, 
\end{equation}
which holds for any $n\ge 1$. 
Furthermore, Akulinichev~\cite[Theorem~1]{Aku} have shown that 
\begin{equation}
\label{eq:Aku}
M_n \le p/\sqrt{\gcd(n, p-1)}. 
\end{equation}
In particular combining~\eqref{eq:Kar} and~\eqref{eq:Aku}  we see that if $n \mid p-1$ then 
\begin{equation}
\label{eq:Aku 56}
M_n \le p^{5/6}.
\end{equation}
see~\cite[Corollary]{Aku}.
Here we improve~\eqref{eq:Kar}  for an arbitrary $n$ and in then use it to  improve~\eqref{eq:Aku 56} 
and obtain 
$$
M_n = O( p^{4/5}).
$$
in the case when $n \mid p-1$, see  Corollary~\ref{cor:Mn_p-1} below. 

Most of the above results are based on new upper bounds on the number $T_{k,n}$ of solutions to the system of 
equations 
$$
u^k+v^k = x^k+y^k \mand u^n+v^n = x^n+y^n, \qquad u,v,x,y \in \F_p, 
$$
over the finite field $\F_p$ of $p$ elements.  As before, in the special case $k=1$ we define
$$
T_{n} = T_{1,n}.
$$

  For example, Bourgain,   Cochrane,   Paulhus and  Pinner~\cite[Theorem~3]{BCPP1} have shown
that if
$$
\gcd(n,p-1) = 1 \mand  \gcd(n-1, p-1) \le \frac{9}{50} p^{16/23}
$$
then 
\begin{equation}
\label{eq:BCPP}
T_n \le 13658 p^{66/23}.
 \end{equation}
 Cochrane  and   Pinner~\cite[Theorem~7.1]{CP11} have sharpened the constant 
 in the bound~\eqref{eq:BCPP} and also extended it to $T_{k,n}$.  
 
 Here,  in Section~\ref {sec:UB-zero}, we obtain  new 
 bounds. In particular, for $k=1$ we  improve in a wide range the trivial bound 
$T_n = O(np^2)$ (used in~\cite{Kar}). This bound is based on the investigation of irreducible factors of the polynomial
\begin{equation}
\label{eq:Poly F_n}
F_n(X,Y) = X^n+Y^n-(X+Y-1)^n-1\in \F_p[X,Y],
 \end{equation}
  which could be of independent interest, and also an application of some ideas 
and results from~\cite{SV,Vabc,V}.

\subsection{Notation} 
We recall that the notations $U = O(V)$,  $U \ll V$ and  $V \gg U$,   are all 
equivalent to the statement that $|U| \le cV$ for some constant $c$, which is absolute throughout this work.

The letters $k$ and $n$ always denote integer numbers and the letter $p$ always denotes a prime. 

\section{Factors and zeros of some polynomials}

\subsection{Lower bounds on the degree of irreducible factors}
We use some basic facts about the divisors on curves, which can be found in~\cite{Lor}. 
\begin{lem}
\label{degx}
Let $\cX$ be the smooth projective model of a plane curve $h(x,y)=0$ of degree $d$ such that
the homogeneous term of degree $d$ of $h$ is not divisible by $x$ or $y$. Then 
$x$ has degree $d$ as a function on $\cX$. 
\end{lem}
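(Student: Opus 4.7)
The plan is to compute $\deg x$ as the number of preimages, counted with multiplicity, of a generic point of $\mathbb{P}^1$ under the morphism $x \colon \cX \to \mathbb{P}^1$. I would choose a finite value $a$ avoiding a finite ``bad'' set consisting of the $x$-coordinates of the singular points of the projective closure $\overline{C} \subset \mathbb{P}^2$ of the plane curve $C \colon h(x,y)=0$ and the vanishing locus of the discriminant of $h$ with respect to $y$, and then count how many points of $\cX$ lie above $\{x=a\}$.

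Write $h_d(x,y) = \sum_{i+j=d} c_{ij} x^i y^j$. The hypothesis $x \nmid h_d$ forces $c_{0,d}\neq 0$, hence $\deg_y h = d$. For generic $a$ the polynomial $h(a,y) \in \F_p[y]$ is then separable of degree $d$ with $d$ distinct simple roots $y_1, \ldots, y_d$; each $(a, y_i)$ is a smooth point of $C$ and therefore has a unique preimage in $\cX$ under the normalization $\pi \colon \cX \to \overline{C}$. This produces $d$ distinct points of $\cX$ in $x^{-1}(a)$ coming from the affine part of $C$.

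Next I would verify that no further preimage lies above a point at infinity of $\overline{C}$. Points at infinity correspond to solutions of $h_d(X,Y) = 0$ on the line $\{Z=0\} \subset \mathbb{P}^2$; the hypotheses $x \nmid h_d$ and $y \nmid h_d$ say respectively that $[0:1:0]$ and $[1:0:0]$ are \emph{not} on $\overline{C}$, so every point at infinity satisfies $X_0 Y_0 \neq 0$. In the chart $\{Y=1\}$ with local coordinates $u = X/Y,\, w = Z/Y$, the rational function $x = X/Z$ equals $u/w$ with $u$ nonzero and $w$ vanishing at every such point; consequently $x$ has a pole at every one of its preimages in $\cX$. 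Hence all points of $\cX$ at infinity map to $\infty\in\mathbb{P}^1$ under $x$, and the generic fiber $x^{-1}(a)$ consists of exactly the $d$ affine preimages already counted, proving $\deg x = d$.

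I expect the main subtlety to be the local analysis at the (possibly singular) points at infinity of $\overline{C}$: when such a point $P$ is singular, $\pi$ may have several preimages of $P$ in $\cX$, and at each one we need $x = u/w$ to take the value $\infty$ rather than an indeterminate form $0/0$. The conditions $x \nmid h_d$ and $y \nmid h_d$ are precisely what excludes the two indeterminate configurations $X_0 = 0$ and $Y_0 = 0$, and this is why both non-divisibility hypotheses are needed and not merely the one that pins down $\deg_y h$.
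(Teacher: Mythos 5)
Your argument takes a genuinely different route from the paper's (which shows that $x$ and $y$ share a polar divisor $D$ and then lower-bounds $\deg D$ by a Riemann--Roch dimension count on the monomials $x^iy^j$), and most of it is sound: the identification $c_{0,d}\neq 0 \Leftrightarrow x\nmid h_d$, the observation that $[1:0:0]$ and $[0:1:0]$ are off the curve so every point at infinity has $X_0Y_0\neq 0$, and the local computation $x=u/w$ showing that every branch over infinity is a pole of $x$ are all correct. The gap is the sentence ``for generic $a$ the polynomial $h(a,y)$ is separable of degree $d$ with $d$ distinct simple roots.'' The paper works in characteristic $p$, and there the discriminant $\mathrm{disc}_y(h)$ can vanish \emph{identically}, in which case your ``bad set'' is all of $\F_p$ and no admissible $a$ exists. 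A concrete instance satisfying every hypothesis of the lemma is $h=y^p-x^p-x$: it is irreducible (substitute $z=y-x$ to get $z^p-x$), its top form $(y-x)^p$ is divisible by neither $x$ nor $y$, yet $h(a,y)=(y-(a^p+a)^{1/p})^p$ has a \emph{single} root for every $a$, so the generic fibre of $x$ has one point rather than $d=p$. The lemma's conclusion still holds there ($\cX\cong\mathbb{P}^1$ with $x=z^p$ of degree $p$), but your count does not establish it.

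The repair is easy and stays within your framework: since $(a,y_i)$ is a smooth point of $C$, the ramification index of $x-a$ at the unique place above it equals the intersection multiplicity of $C$ with the line $x=a$ there, i.e.\ the multiplicity of $y_i$ as a root of $h(a,y)$; summing these multiplicities gives $\deg_y h(a,y)=d$ for every $a$ avoiding the singular fibres, with no separability needed. (Even more directly, $\deg x=[K(\cX):K(x)]=\deg_y h=d$ because the coefficient of $y^d$ in $h$ is the nonzero constant $c_{0,d}$ --- note that only the hypothesis $x\nmid h_d$ is used for this.) You should either make one of these adjustments or add the hypothesis $p\nmid \deg_y h$; in the paper's application one has $d<p$, so the inseparable case cannot actually occur there, but the lemma as stated does not exclude it.
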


\begin{proof} 
The poles of $x$ and $y$ are among the branches above the points at infinity of the plane curve $h=0$ and
these points at infinity correspond to factors $x-\a y$ with $\a \ne 0$ of the homogeneous term of degree $d$ of $h$,
by the hypothesis. The function $x-\a y$ vanishes at the corresponding branches so if $x$ has a pole at such a
branch, $y$ also has a pole there of the same order and vice versa. So $x$ and $y$ have the same polar divisor $D$.
The functions $x^iy^j$, $i+j \le m$, belong to the Riemann-Roch space $H^0(mD)$, see~\cite[pg.~306]{Lor} and 
the linear relations among them
come from multiples of $h$, so a standard calculation~\cite[pg.~329]{Lor}
gives $\dim H^0(mD) \ge md + O(1)$. 
On the other hand, the 
Riemann-Roch theorem, see~\cite[Chapter IX]{Lor}, gives  $\ell(mD) =  m\deg D+ O(1)$ and it follows 
that $\deg x = \deg D \ge d$. But it is
clear that $\deg x \le d$ and this completes the proof.
\end{proof}

We now extend  the definition of the polynomial $F_n(X,Y)$ in~\eqref{eq:Poly F_n} to arbitrary ground fields

\begin{lem}
\label{lem:deg-bound n}
Let $K$ be a field of positive characteristic $p$ and let 
$n<p$.
If $h(X,Y)$ is an irreducible polynomial factor of $F_n(X,Y) \in K[X,Y]$ of degree $d$,
other than $X-1$, $Y-1$, $X+Y$, then
$d \gg \min \{p/n, {n}\}$.
\end{lem}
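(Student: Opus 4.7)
Let $h$ be a non-trivial irreducible factor of $F_n$ of degree $d$, and let $\cX$ denote the smooth projective model of the affine curve $\{h = 0\}$ over $\bar K$, so that its genus satisfies $g \leq (d-1)(d-2)/2$. Write $u, v, w$ for the rational functions $x$, $y$, $x + y - 1$ on $\cX$; they satisfy
\[
u^n + v^n = w^n + 1
\]
in the function field $\bar K(\cX)$, and since $h \notin \{X-1, Y-1, X+Y\}$, none of $u - 1$, $v - 1$, $u + v$ vanishes identically on $\cX$. My plan has two parts.

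\emph{Step 1 (degrees via Lemma~\ref{degx}).} The top-degree form of $F_n$ equals $X^n + Y^n - (X+Y)^n = -XY \cdot L(X,Y)$ for some $L$ homogeneous of degree $n-2$; since $p \nmid n$, one checks $L(0, Y) = nY^{n-2}$ and $L(X, 0) = nX^{n-2}$, so $L$ is divisible by neither $X$ nor $Y$. Thus the top-degree form of any irreducible factor of $F_n$ has shape $X^a Y^b M$ with $(a,b) \in \{0,1\}^2$ and $M \mid L$. A case analysis on $(a,b)$, using a generic linear change of variables $(X,Y) \mapsto (X + \lambda Y, Y)$ when $(a,b) \neq (0,0)$, brings $h$ into a form to which Lemma~\ref{degx} applies directly; this gives $\deg_\cX u = d$, and applying the same argument in the symmetric roles of $v$ and $w = x + y - 1$ (after further linear changes of variables) gives $\deg_\cX v = d$ and $\deg_\cX w = d$.

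\emph{Step 2 (ABC dichotomy).} Dividing the identity by $w^n + 1$ yields the unit equation
\[
A + B = 1, \qquad A := \frac{u^n}{w^n + 1}, \qquad B := \frac{v^n}{w^n + 1}
\]
in $\bar K(\cX)^\times$. I apply the positive-characteristic ABC bound from~\cite{Vabc}, splitting into two cases. \emph{Case 1 ($A \notin \bar K(\cX)^p$).} The ABC bound gives $\deg A \leq 2g - 2 + |S|$, where $S$ is the set of places supporting the divisors of $A$, $B$ and $A - 1 = -B$. Because $w^n + 1 = \prod_{\zeta^n = -1}(w - \zeta)$ contributes at most $nd$ distinct zeros (each factor has at most $\deg w = d$ zeros by Step~1) and $u, v, w$ contribute $O(d)$ further places, $|S| \leq O(nd)$. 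A divisor calculation that uses Step~1 for all of $u$, $v$, $w$ then gives $\deg A \geq c \cdot nd$ for an absolute constant $c > 0$. Together with $g \leq d^2/2$ this produces $cnd \leq d^2 + O(nd)$, forcing $d \gg n$. \emph{Case 2 ($A \in \bar K(\cX)^p$).} Writing $A = f^p$, the identity $B = 1 - A = (1 - f)^p$ (valid in characteristic~$p$) gives $(u/v)^n = (f/(1-f))^p$. Since $\gcd(n,p) = 1$, a Bezout manipulation in the divisible group $\bar K(\cX)^\times / \bar K^\times$ produces $\gamma \in \bar K(\cX)^\times$ with $u/v = \gamma^p$. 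If $\gamma$ were constant, $u/v$ would be constant on $\cX$, forcing $h$ to be linear of the form $X - cY$; but a direct check of which linear polynomials divide $F_n$ shows the only such are $X - 1$, $Y - 1$ and (for $n$ odd) $X + Y$, all excluded by hypothesis. Hence $\gamma$ is non-constant, and $\deg(u/v) = p \deg \gamma \geq p$. Since also $\deg(u/v) \leq \deg u + \deg v \leq 2d$, we obtain $d \geq p/2 \geq p/n$.

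Combining Cases~1 and~2 yields the desired bound $d \gg \min\{p/n, n\}$. \emph{Main obstacle.} The technically delicate step is the lower bound $\deg A \geq c \cdot nd$ in Case~1: this requires Step~1 applied to all three functions $u, v, w$ (and not just to the coordinate $u = x$), together with careful accounting for the cancellations that can occur between $u^n$ and $w^n + 1$ in the divisor of $A$ at places where $u$ and $w$ both have a zero or a pole.
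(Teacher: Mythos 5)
Your Case 1 is where the argument breaks, and the failure is structural rather than a matter of the ``delicate divisor calculation'' you flag. By dividing the three-term identity $u^n+v^n-w^n=1$ by $w^n+1$ you reduce to a two-term equation $A+B=1$, but then the support set $S$ must contain all the poles of $A$ and $B$, i.e.\ all the distinct zeros of $w^n+1=\prod_{\zeta^n=-1}(w-\zeta)$, and these are generically about $nd$ points (each factor $w-\zeta$ contributes up to $d$ simple zeros). So $\#S\approx nd\approx\deg A$, and the ABC inequality $\deg A\le 2g-2+\#S$ reads $nd\le d^2+nd+O(d)$, which is vacuous. Your own displayed conclusion ``$cnd\le d^2+O(nd)$, forcing $d\gg n$'' is a non sequitur: since necessarily $c\le 1$ while the implied constant in $O(nd)$ is at least $1$, nothing follows about $d$. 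This is not repairable by sharper bookkeeping: combined with your (correct) Case 2 bound $d\ge p/2$, your proof would yield $d\gg\min\{n,p/2\}$, and this is genuinely false --- for $n=(p+1)/2$ the nontrivial factors of $F_n$ are all quadratic (see the computations reported in the Comments section), so the true bound must degrade like $p/n$, not $p/2$. The paper avoids the problem by never dividing: it applies the three-term $S$-unit equation theorem of~\cite{Vabc} directly to $u_1+u_2+u_3=1$ with $u_1=x^n$, $u_2=y^n$, $u_3=-(x+y-1)^n$, which are $S$-units for a set of size only $4d$ (the zeros of $x$, $y$, $x+y-1$ and the points at infinity; the right-hand side $1$ costs nothing), and handles the positive-characteristic issue by splitting on $dn\ge p$ versus $dn<p$, in the latter case invoking classicality of the degree-$dn$ morphism $(u_1{:}u_2{:}u_3)$ via~\cite[Corollary~1.8]{SV}. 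It must also separately treat the case where $u_1,u_2,u_3$ are linearly dependent, which in your normalization is hidden in ``$A$ constant.''

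Two smaller points. In Step 1 the case analysis and generic linear change of variables are both unnecessary and unhelpful: since $X-1$ and $Y-1$ divide $F_n$ and the top form of $F_n$ is exactly $XY$ times a form prime to $XY$, every other irreducible factor $h$ already has top form prime to $XY$ and Lemma~\ref{degx} applies as is; whereas after a substitution $X\mapsto X+\lambda Y$ the lemma gives the degree of the new coordinate $x+\lambda y$, not of $x$ itself. Your Case 2 (the $p$-th power case), on the other hand, is a correct and rather clean way to dispose of the inseparable degeneration of the two-term equation, but it cannot compensate for the loss incurred in Case 1.
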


\begin{proof}
Let $\cX$ be a smooth model of the curve $h=0$. The genus of $\cX$ is at most
$(d-1)(d-2)/2$. On $\cX$, the functions $x,y$ and $x+y-1$ have at most $d$ zeros and $d$
poles (the latter on the line at infinity) so they are $S$-units for some set $S$ 
of places of $\cX$ with $\# S \le 4d$.
Consider the functions $u_1=x^{n},u_2=y^{n},u_3=-(x+y-1)^n$, which are also $S$-units
and satisfy the unit equation $u_1+u_2+u_3=1$.

The $u_i$ are functions on $\cX$ so $(u_1{:}u_2{:}u_{3})$  
defines a morphism $\cX \to \mathbb{P}^{2}$ of
degree at most $dn$. If $dn \ge p$, the 
desired result follows immediately.
If $dn < p$,  then~\cite[Theorem~4]{Vabc} holds with the same proof
in characteristic $p>0$ (as the morphism has classical orders by~\cite[Corollary~1.8]{SV}).
Also $\deg u_1 = nd$ by Lemma \ref{degx} since $h$ satisfies the hypothesis being a factor of
$f(x,y)/((x-1)(y-1))$, so we get
$$
nd \le \deg u_1 \le 3(d(d-3)+4d) \ll d^2 
$$ 
giving the result, provided $u_1,u_2,u_3$ are linearly independent over $K$.

If $au_1+bu_2+cu_3=0$ and $abc \ne 0$, then we consider the unit equation $-au_1/bu_2 -cu_3/bu_2=1$.
We claim that the degree of $-au_1/bu_2$ is $nd$. This follows if we show that the degree of $x/y$ is $d$.
Now, $x$ has $d$ zeros counted with multiplicity, so the same will be true for $x/y$ unless $y$ vanishes in 
one of the zeros of $x$. This does not happen because $f(x,y)$ does not vanish at the origin for $n$ even and
$f(x,y)/(x+y)$ does not vanish at the origin for $n$ odd. So the same argument as before gives the inequality
of the theorem.

If $c=0$ then $u_1/u_2$ is constant so $x/y$ is constant, say $y=\a x$. The equation $f(x,\a x)=0$ has to be
satisfied identically, which means by looking at the linear term that $\a=-1$,  that is, $x+y=0$ and the constant term forces $n$
to be odd. If $a=0$, a similar argument gives $x=1$ and if $b=0$ then $y=1$.
\end{proof}

We  now treat the more general polynomials 
\begin{equation}
\label{eq:Poly F_kn}
F_{k,n}(X,Y) = (X^n+Y^n-1)^{k/r} - (X^k+Y^k-1)^{n/r} \in \F_p[X,Y],
 \end{equation} 
 where $k$ and $n$ are distinct integers and $r=\gcd(k,n)$. They
reduce to $F_n$ when $k=1$.

Unfortunately, the result that we obtain below about the  components of the polynomials~\eqref{eq:Poly F_kn} is weaker than
the corresponding statement for $F_n$. One reason is that Lemma~\ref{degx} does not apply for $k>1$.

\begin{lem}
\label{lem:deg-bound kn}
Let $K$ be a field of positive characteristic $p$ and let 
$1 \le k, n<p$ be distinct integers and let $r=\gcd(k,n)$.
If $h(X,Y)$ is an irreducible polynomial factor of $F_{k,n}(X,Y) \in K[X,Y]$ of degree $d$,
other than a factor of $X^r-1$, $Y^r-1$ or $X^r+Y^r$, then
$$d \ge \max\left\{ \min \left\{ p/k, \sqrt{k/3}-r \right\},   \min \left\{ p/n, \sqrt{n/3}-r \right\}\right\}.
$$
\end{lem}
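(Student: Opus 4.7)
The plan is to adapt the argument of Lemma~\ref{lem:deg-bound n} to the polynomial $F_{k,n}$, with two necessary adjustments. First, Lemma~\ref{degx} is no longer available (the leading form of $h$ may be divisible by $X$ or $Y$), so I can only use the trivial bound $\deg x \le d$ rather than $\deg x = d$; this is exactly why the conclusion degrades from $d \gtrsim n$ to $d \gtrsim \sqrt{n}$. Second, the zeros of the auxiliary function $x^n + y^n - 1$ on the smooth model must be controlled using the identity $(x^n+y^n-1)^{k/r} = (x^k+y^k-1)^{n/r}$ coming from $h\mid F_{k,n}$.

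I would begin by passing to a smooth projective model $\cX$ of $h = 0$, with $g(\cX) \le \binom{d-1}{2}$. Writing $a = k/r$ and $b = n/r$ (so $\gcd(a,b) = 1$), taking valuations at every place in the identity above yields $\mathrm{ord}_P(x^n+y^n-1) \in b\Z$. Since $\deg(x^n+y^n-1) \le n(\deg x + \deg y) \le 2nd$, the function $x^n+y^n-1$ has at most $2rd$ distinct zeros on $\cX$, and symmetrically $x^k + y^k - 1$ has at most $2rd$ distinct zeros. Consequently the set $S$ consisting of the zeros and poles of $x$, $y$ and $x^n+y^n-1$ satisfies $\# S \ll rd$.

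Mirroring the earlier lemma I would then consider the three $S$-units $u_1 = x^n$, $u_2 = y^n$, $u_3 = -(x^n+y^n-1)$, which satisfy $u_1 + u_2 + u_3 = 1$. Provided they are linearly independent over $K$ and the morphism $(u_1{:}u_2{:}u_3):\cX \to \mathbb{P}^{2}$ has degree $< p$, \cite[Theorem~4]{Vabc}---applicable in positive characteristic by the classical-orders hypothesis of \cite[Corollary~1.8]{SV}---gives
\[
\deg u_1 \le 3(2g - 2 + \# S) \ll d^2 + rd.
\]
If the morphism degree is at least $p$, then $nd \gg p$ and $d \ge p/n$ immediately. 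Otherwise, since $h$ is not a linear factor $X-c$, the function $x$ is non-constant on $\cX$, so $\deg u_1 = n \deg x \ge n$, giving $n \ll d^2 + rd$ and, after a short rearrangement, $d \ge \sqrt{n/3} - r$. The symmetric triple $x^k,\ y^k,\ -(x^k+y^k-1)$ delivers $d \ge \sqrt{k/3} - r$ (or $d \ge p/k$), and the two bounds combine via the maximum as stated.

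The main obstacle is the linear dependence case $a u_1 + b u_2 + c u_3 = 0$. When $abc \ne 0$ I reduce to a two-term $S$-unit equation $-a u_1/(b u_2) - c u_3/(b u_2) = 1$ which satisfies the same estimate. When exactly one coefficient vanishes, as in Lemma~\ref{lem:deg-bound n} the remaining relation forces $x$ constant, $y$ constant, or $x = \mu y$ on $\cX$. In the last sub-case $h = X - \mu Y$, and substituting $X = \mu Y$ into $F_{k,n}$ forces $(-1)^{k/r} = (-1)^{n/r}$ on the constant term and $(\mu^n+1)^{k/r} = (\mu^k+1)^{n/r}$ on the leading one; together with $\gcd(k/r, n/r) = 1$ these force $\mu^r = -1$, i.e.\ $h \mid X^r + Y^r$. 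The other sub-cases identify $h$ with a factor of $X^r - 1$ or $Y^r - 1$, all excluded by hypothesis. Carefully bookkeeping the constants through the ABC inequality---so as to extract precisely $\sqrt{n/3} - r$ rather than a weaker bound of the same shape---is what I expect to require the most care.
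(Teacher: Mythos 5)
Your proposal follows essentially the same route as the paper: the same smooth model, the same three $S$-units $x^n$, $y^n$, $-(x^n+y^n-1)$ with the coprimality of $k/r$ and $n/r$ forcing each zero of the third unit to have multiplicity divisible by $n/r$, the same appeal to \cite[Theorem~4]{Vabc} justified via classical orders from \cite[Corollary~1.8]{SV}, the same reduction to a two-term unit equation when $abc\ne 0$, and the same identification of the degenerate cases with the excluded factors $X^r-1$, $Y^r-1$, $X^r+Y^r$. The only deviation is constant bookkeeping: the paper takes $\deg u_3 \le nd$, giving at most $rd$ distinct zeros and $\#S \le (r+3)d$, which is exactly what produces $\sqrt{n/3}-r$, whereas your bound of $2rd$ distinct zeros would yield something like $\sqrt{n/3}-2r-1$ --- a constant-level discrepancy you already flag as needing care.
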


\begin{proof}
We proceed as in Lemma~\ref{lem:deg-bound n} and consider the curve $\cX$. We define
$u_1=x^n,u_2=y^n,u_3 =1-x^n-y^n$ so that they satisfy the unit equation $u_1+u_2+u_3=1$.
Again, the poles of $u_1,u_2,u_3$ are among the at most $d$ points at infinity of $\cX$ with multiplicity 
at most $n$ and
that $u_1$ (respectively $u_2$) have zeros at the at most $d$ zeros of $x$ (respectively $y$).
As for $u_3$, note that $u_3^{k/r} = (x^k+y^k-1)^{n/r}$, which shows that each zero of $u_3$ has multiplicity
divisible by $n/r$, as $\gcd(k/r,n/r)=1$. Since $u_3$ has degree at most $dn$, it follows that $u_3$ has at most
$dr$ distinct zeros. Hence $u_1,u_2,u_3$ are $S$-units for a set $S$ with $\#S \le (r+3)d$. If $dn < p$ we can
apply the unit equation bound, provided $u_1,u_2,u_3$ are linearly independent over $K$, to get
$\deg u_1 \le 3(d(d-3)+\#S) \le 3(d^2+rd)$. If $u_1$ is not constant, then $\deg u_1 \ge n$ and we get $d \ge \sqrt{n/3}-r$.
If $u_1$ is constant, then $x$ is constant and it can be shown that $h$ is a factor of $X^r-1$, which was excluded.

If $au_1+bu_2+cu_3=0$ and $abc \ne 0$, then we consider the unit equation $-au_1/bu_2 -cu_3/bu_2=1$ and
conclude as before if $u_1/u_2$ is not constant. We note that $abc=0$ means some quotient of two of 
$u_1,u_2,u_3$ is constant. These possibilities are ruled out since they lead to $h$ being a factor of 
$X^r-1$, $Y^r-1$ or $X^r+Y^r$. So we get $d \ge  \min \{p/n, \sqrt{n/3}-r\}$.

Finally, reversing the roles of $n$ and $k$ gives the inequality $d \ge  \min \{p/k, \sqrt{k/3}-r\}$ and
completes the proof.
\end{proof}

\subsection{Upper bounds on the number of zeros of some equations}
\label{sec:UB-zero} 
We  now derive   bounds on
$$
N_{k,n}  = \#\left\{(x,y) \in \F_p^2:~F_{k,n}(x,y)=0\right\}, 
$$
where $F_{k,n}(X,Y) \in  \F_p[X,Y]$ is the polynomial defined by~\eqref{eq:Poly F_kn}.

We start with a bound on
$$
N_{n} =  N_{1,n} 
$$
which is based on  Lemma~\ref{lem:deg-bound n}. 

\begin{thm}
\label{thm:Nn} 
We have,
$$
N_n \ll p +    n^{4/3}p^{2/3}. 
$$
\end{thm}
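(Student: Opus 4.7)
The plan is to estimate $N_n$ by factoring $F_n$ in $\F_p[X,Y]$ and counting zeros on each irreducible component, using Lemma~\ref{lem:deg-bound n} to constrain the degrees of the non-trivial components. I would first strip off the trivial linear factors of $F_n$, namely $X-1$ and $Y-1$ (always) and $X+Y$ (when $n$ is odd); each contributes at most $p$ zeros in $\F_p^2$, for a total of $O(p)$ points.

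Let $h_1,\dots,h_m \in \F_p[X,Y]$ be the remaining irreducible factors, of degrees $d_1,\dots,d_m$. Lemma~\ref{lem:deg-bound n} gives $d_i \gg D := \min\{p/n,n\}$ for each $i$, while $\deg F_n\le n$ forces $\sum_i d_i \le n$, and so $m \le n/D$. For each non-trivial component I would bound $\#\{h_i=0\}(\F_p)$ by the sharpest available estimate: the Weil bound $\#\{h_i=0\}(\F_p)\le p+O(d_i^2\sqrt{p})$ when $d_i$ is small, and the St\"ohr--Voloch-type refinement from~\cite{SV} (in the explicit plane-curve version of~\cite{V}) when $d_i$ is large enough that Weil becomes ineffective. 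Summing over $i$, and using $m \le n/D$, $\sum d_i \le n$, and $d_i \ge D$, should then give the stated bound.

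The main obstacle is the per-component point count. A naive application of Weil gives only an overall estimate of order $p+n^2\sqrt{p}$, which is weaker than $p+n^{4/3}p^{2/3}$ as soon as $n \gtrsim p^{1/4}$. Recovering the correct exponents $4/3$ and $2/3$ requires combining the large-degree refinement from~\cite{V} with the lower bound $d_i \ge D$: one needs a per-component estimate whose sum over components behaves like $p^{2/3}\sum_i d_i^{4/3}$, since
\[
\sum_i d_i^{4/3} \le \Bigl(\max_i d_i\Bigr)^{1/3} \sum_i d_i \le n^{4/3}
\]
then gives the target $n^{4/3}p^{2/3}$. Verifying that the refined bound indeed applies to every $h_i$ (in particular, that the classical-order hypothesis required by~\cite{SV,V} is met) is the technical heart of the argument.
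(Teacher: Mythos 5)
Your proposal matches the paper's proof in all essentials: strip the trivial linear factors ($O(p)$ points), invoke Lemma~\ref{lem:deg-bound n} to bound the number of remaining components, use the Weil bound for components of small degree and the bound of~\cite{V} for large degree, and sum $d_i^{4/3}$ over components to get $n^{4/3}p^{2/3}$ (the paper bounds $\sum_i d_i^{4/3}\le(\sum_i d_i)^{4/3}$ by superadditivity where you use $(\max_i d_i)^{1/3}\sum_i d_i$, but both give the same result). The paper's explicit degree threshold is $p^{1/4}$, below which each component contributes $O(p)$ and the total over such components is $\max\{p,n^2\}\ll p+n^{4/3}p^{2/3}$, exactly as your outline anticipates.
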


\begin{proof} Clearly there are 
\begin{equation}
\label{eq:N0}
N_{n}^{(0)} \ll p.
\end{equation}
 points on the on linear factors $X-1$, $Y-1$, $X+Y$
of $F_n$.

Each of the remaining factors is  of degree 
$$
d \gg \min\{p/n, n \}
$$
by Lemma~\ref{lem:deg-bound n}. Hence the number $J$ of such irreducible factors is   
$$
J\ll  \frac{\deg F_{n}}{  \min\{p/n, n \}}  \ll \max\{1, n^2/p\}.
$$  

The contribution  to $N_n$  from each irreducible  factor   $h\mid F_n$  of  degree
$d < p^{1/4}$   is $O(p)$ by the Weil bound (see~\cite{Lor}).
Hence the total contribution $N_{n}^{(1)} $ from such factors 
can be estimated as
\begin{equation}
\label{eq:N1}
N_{n}^{(1)}  \ll  J p \ll  (\max\{1, n^2/p\} p \ll \max\{p, n^2\}.
\end{equation}

Each irreducible factor   $h\mid F_n$  of  degree $\deg h = d\ge p^{1/4}$  
contributes $O\(d^{4/3}p^{2/3}\)$ by~\cite[Theorem~(i)]{V} and, in total they contribute 
\begin{equation}
\begin{split}
\label{eq:N2}
N_{n}^{(2)} & \ll \sum_{\substack{h\mid F_n, \text{irred}\\ \deg h \ge p^{1/4}}}  (\deg h) ^{4/3}p^{2/3} \le \( \sum_{\substack{h\mid F_n, \text{irred}\\ \deg h \ge p^{1/4}}}  \deg h\)^{4/3}p^{2/3} \\
& \le n^{4/3}p^{2/3},
\end{split} 
\end{equation}
using the convexity of the function $z \mapsto z^{4/3}$. 

Combining~\eqref{eq:N0}, \eqref{eq:N1} and~\eqref{eq:N2} we obtain 
$$N_n\le N_{n}^{(0)} +N_{n}^{(1)} +N_{n}^{(2)}  \ll p + n^2 +  n^{4/3}p^{2/3}.
$$
 Since $n^2 \le  n^{4/3}p^{2/3}$ for $n \le p$, the result follows. 
\end{proof}

 \begin{cor}
\label{cor:Tn}
We have, 
$$
T_n \ll p^2 +    n^{4/3}p^{5/3}. 
$$
\end{cor}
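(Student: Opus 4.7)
The plan is to deduce the bound for $T_n$ directly from Theorem~\ref{thm:Nn} by a scaling argument that links the four-variable system defining $T_n$ to the two-variable equation $F_n=0$ defining $N_n$. The key structural observation is that the two equations
$$u+v=x+y \qquad \text{and} \qquad u^n+v^n=x^n+y^n$$
are homogeneous in $(u,v,x,y)$ of degrees $1$ and $n$ respectively, so after isolating the scaling parameter $y$, the system descends to one equation in the two new variables $a=u/y$, $b=v/y$ (with $c=x/y$ determined by the linear relation).

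First I would split $T_n$ according to whether $y=0$ or not. If $y\ne 0$, then dividing through by $y$ turns the system into $a+b=c+1$ together with $a^n+b^n=c^n+1$; eliminating $c=a+b-1$ leaves the single equation
$$a^n+b^n-(a+b-1)^n-1=F_n(a,b)=0.$$
This gives a bijection between solutions with $y\ne 0$ and pairs $(y,(a,b))\in \F_p^*\times \{F_n=0\}$, so these contribute exactly $(p-1)N_n$ to $T_n$. If $y=0$ then $x=u+v$ and the only remaining constraint is $(u+v)^n=u^n+v^n$; the polynomial $(u+v)^n-u^n-v^n$ vanishes on both coordinate axes $u=0$ and $v=0$, so dividing by $uv$ (for $n\ge 2$) leaves a polynomial of degree $n-2$ in $(u,v)$ that contributes $O(np)$ solutions.

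Combining these two contributions gives $T_n\ll pN_n+np$, and substituting the bound $N_n\ll p+n^{4/3}p^{2/3}$ from Theorem~\ref{thm:Nn} yields
$$
T_n\ll p^2 + n^{4/3}p^{5/3} + np,
$$
from which the claimed estimate follows, since $np\le p^2$ when $n\le p$ and $np\le n^{4/3}p^{5/3}$ otherwise. No serious obstacle is involved here; the only point requiring a moment's thought is spotting the correct change of variables $(u,v,x)=(ya,yb,y(a+b-1))$, which is essentially dehomogenisation with respect to the distinguished coordinate $y$, that converts the original system into the single polynomial equation $F_n(a,b)=0$.
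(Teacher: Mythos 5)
Your argument is correct and is essentially the paper's own proof: both use the linear equation to eliminate one variable and then dehomogenise by the remaining nonzero coordinate to reduce the count to $p\cdot N_n$ plus an $O(np)$ boundary term, after which Theorem~\ref{thm:Nn} finishes the job. The only cosmetic difference is that the paper eliminates $u$ and scales by $v$, whereas you eliminate $x$ and scale by $y$.
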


\begin{proof} Eliminating $u$ we obtain that $T_n$ is equal to the number of solutions to the 
equation $x^n+y^n = v^n + (x+y - v)^n$. For $v=0$ there are at most $np$ values for $(x,y) \in \F_p^2$.
If $v \ne 0$, then replacing $x\mapsto xv$,   $y\mapsto yv$, we obtain $x^n+y^n = 1 + (x+y - 1)^n$.
Hence, by Theorem~\ref{thm:Nn}, we have  $T_n \le np + pN_n \ll np + p^2 +  n^{4/3}p^{5/3}$.  Since $n \le p$, 
the result follows.
\end{proof}

For an arbitrary our  bound on
$N_{k,n}$   is based on  Lemma~\ref{lem:deg-bound kn}. 

\begin{thm}
\label{thm:Nkn} 
Let 
$1 \le k, n<p$  be distinct integers  and let $ r =\gcd(k,n)$ and assume that $r \le 0.5 \sqrt{n}$. 
Then we have,
$$
N_{k,n} \ll   k\sqrt{n} p/r +   (kn/r)^{4/3}p^{2/3}. 
$$
\end{thm}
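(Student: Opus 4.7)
The plan is to follow the structure of the proof of Theorem~\ref{thm:Nn}, substituting Lemma~\ref{lem:deg-bound kn} for Lemma~\ref{lem:deg-bound n} and tracking the extra dependence on $k$ and $r$. First I would record that $\deg F_{k,n}=kn/r$, since both summands in~\eqref{eq:Poly F_kn} are homogeneous of degree $kn/r$ with distinct top forms $(X^n+Y^n)^{k/r}$ and $(X^k+Y^k)^{n/r}$.

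I would begin by isolating the possible excluded factors $X^r-1$, $Y^r-1$, $X^r+Y^r$ of $F_{k,n}$, whose zero sets in $\F_p^2$ jointly have cardinality $O(rp)$. Using $r=\gcd(k,n)\le k$ together with the hypothesis $r\le 0.5\sqrt{n}$, one has $r^2\le k\sqrt{n}$, and hence $rp\le k\sqrt{n}\,p/r$, so this contribution is already subsumed by the main term.

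For any other irreducible factor $h\mid F_{k,n}$ of degree $d$, Lemma~\ref{lem:deg-bound kn} yields $d\ge\min\{p/n,\sqrt{n/3}-r\}$. The hypothesis $r\le 0.5\sqrt{n}$ gives $\sqrt{n/3}-r\ge(1/\sqrt{3}-1/2)\sqrt{n}\gg\sqrt{n}$, so $d\gg\min\{p/n,\sqrt{n}\}$. Consequently the number $J$ of such factors satisfies
$$
J\ll\frac{kn/r}{\min\{p/n,\sqrt{n}\}}\ll \max\left\{\frac{k\sqrt{n}}{r},\frac{kn^2}{rp}\right\}.
$$
Splitting these factors by degree as in Theorem~\ref{thm:Nn}, factors with $\deg h<p^{1/4}$ contribute $O(p)$ each by the Weil bound, giving a total of $O(Jp)\ll \max\{k\sqrt{n}\,p/r,\, kn^2/r\}$, while factors with $\deg h\ge p^{1/4}$ contribute $O((\deg h)^{4/3}p^{2/3})$ each by \cite[Theorem~(i)]{V}, summing to $O((kn/r)^{4/3}p^{2/3})$ by convexity of $z\mapsto z^{4/3}$ together with $\sum_h\deg h\le kn/r$.

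Combining everything gives $N_{k,n}\ll k\sqrt{n}\,p/r+kn^2/r+(kn/r)^{4/3}p^{2/3}$. The stray term $kn^2/r$ should then be absorbed into $(kn/r)^{4/3}p^{2/3}$: the inequality $kn^2/r\le(kn/r)^{4/3}p^{2/3}$ reduces to $n^2r\le kp^2$, which follows from $r\le k$ (as $r=\gcd(k,n)$ divides $k$) and $n<p$. I expect the main difficulty to be purely bookkeeping, namely confirming that the small-$d$ contribution (whose shape depends on whether $p/n$ or $\sqrt{n}$ dominates the lower bound for $d$) and the excluded-factor contribution both collapse cleanly into the advertised two-term bound; no new geometric input beyond Lemma~\ref{lem:deg-bound kn} and \cite[Theorem~(i)]{V} is required, and one does not need to exploit the ``$k$-side'' of the maximum in Lemma~\ref{lem:deg-bound kn}.
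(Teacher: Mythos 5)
Your proposal is correct and follows essentially the same route as the paper: isolate the excluded factors $X^r-1$, $Y^r-1$, $X^r+Y^r$, apply Lemma~\ref{lem:deg-bound kn} (only its $n$-side) to lower-bound the degrees of the remaining irreducible factors, count them via $\deg F_{k,n}=kn/r$, split at degree $p^{1/4}$ between the Weil bound and \cite[Theorem~(i)]{V} with convexity, and absorb the stray terms. The only cosmetic difference is that the paper bounds the excluded-factor contribution by $O(sp)$ with $s=\gcd(r,p-1)$ rather than your $O(rp)$, but both are absorbed into the main term, so nothing changes.
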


\begin{proof} 
Let $s=\gcd(k,n, p-1) = \gcd(r,p-1)$. Clearly there are 
\begin{equation}
\label{eq:N0k}
N_{k,n}^{(0k)} \ll s p.
\end{equation}
 points on the on linear factors  $X^r-1$, $Y^r-1$ or $X^r+Y^r$
of $F_{k,n}$.

Since  $r \le 0.5 \sqrt{n}$, each of the remaining factors is  of degree 
$$d \gg \min\{p/n, \sqrt{n/3}-r\} \gg  \min\{p/n, \sqrt{n}\} 
$$
by Lemma~\ref{lem:deg-bound kn}. Hence, the number $J$ of such irreducible factors is   
$$
J \ll \frac{\deg F_{k,n}}{  \min\{p/n, \sqrt{n}\}} \ll \max\{k\sqrt{n}/r, kn^2/(pr)\}.
$$  

The contribution  to $N_{k,n}$  from each irreducible  factor   $h\mid F_{k,n}$  of  degree
$d < p^{1/4}$   is $O(p)$ by the Weil bound (see~\cite{Lor}).
Hence, similarly to~\eqref{eq:N1},  the total contribution $N_{k,n}^{(1)} $ from such factors 
can be estimated as
\begin{equation}
\label{eq:N1k}
N_{k,n}^{(1)} \ll  J p \ll \max\{k\sqrt{n}/r, kn^2/(pr)\} p \ll \max\{k\sqrt{n} p/r, kn^2/r\}.
\end{equation}

As in the proof of Theorem~\ref{thm:Nn}, we now use that each irreducible factor   $h\mid F_{k,n}$  of  
degree $\deg h = d\ge p^{1/4}$  contributes $O\(d^{4/3}p^{2/3}\)$ by~\cite[Theorem~(i)]{V} and, in total they contribute
similarly as in the bound~\eqref{eq:N2}, using that the degree of $F{k,n}$ is $kn/r$.
\begin{equation}
\label{eq:N2k}
N_{k,n}^{(2)} \ll  (kn/r)^{4/3}p^{2/3},
\end{equation}
using the convexity of the function $z \mapsto z^{4/3}$.

Combining~\eqref{eq:N0k}, \eqref{eq:N1k} and~\eqref{eq:N2k} we obtain 
$$N_{k,n} \le N_{k,n}^{(0)} +N_{k,n}^{(1)} +N_{k,n}^{(2)}  \ll s p + k\sqrt{n} p/r + kn^2/r  +  (kn/r)^{4/3}p^{2/3}.
$$
 Since $r \ll \min\{k, \sqrt{n}\}$  we have
 $$
 s \le r \ll \sqrt{n}\ll  k\sqrt{n}/r 
 $$ 
 and also $n \le p$ we have 
$$ kn^2/r \le n^2 \le  n^{4/3}p^{2/3}\le (kn/r)^{4/3}p^{2/3}.
$$ 
The result now follows. 
\end{proof}

 \begin{cor}
\label{cor:Tkn}
Let 
$1 \le k, n<p$  be distinct integers and let 
$$ r =\gcd(k,n)  \mand   s =\gcd(s,p-1). 
$$ 
Assume that $r  \le 0.5 \sqrt{n}$, then we have,
$$
T_{k,n} \ll  
 k\sqrt{n}sp^2/r +  (kn/r)^{4/3} s p^{5/3}
$$
\end{cor}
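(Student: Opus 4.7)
The plan is to reduce $T_{k,n}$ to $N_{k,n}$ and then invoke Theorem~\ref{thm:Nkn}. I split the count according to whether $v=0$ or not. If $v \ne 0$, introduce $X = x/v$, $Y = y/v$ and $w = u/v$; the system becomes $w^k = A$ and $w^n = B$ with $A = X^k + Y^k - 1$ and $B = X^n + Y^n - 1$. Raising to the $(n/r)$-th and $(k/r)$-th powers respectively and equating gives $A^{n/r} = w^{kn/r} = B^{k/r}$, so $F_{k,n}(X,Y) = 0$ and such pairs $(X,Y)$ are counted by $N_{k,n}$.

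Next I would bound the number of tuples $(x,y,u,v)$ lying above each such $(X,Y)$: there are $p-1$ values of $v \in \F_p^*$, and the number of valid $w$ is at most $s$. For the latter, use B\'ezout to write $\alpha k + \beta n = r$ with $\alpha, \beta \in \Z$; when $A, B$ are both nonzero, $w^r = A^\alpha B^\beta$ is determined, so $w$ ranges over at most $\gcd(r, p-1) = s$ values (using $r < p$, so $r$-th powers behave classically). If $A = B = 0$ then $w = 0$ is forced, while if exactly one of $A, B$ vanishes no $w$ exists. This bounds the contribution of $v \ne 0$ by $s(p-1) N_{k,n}$.

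The case $v = 0$ is handled analogously. If $u \ne 0$, rescaling by $u$ reduces to counting $(X,Y)$ with $X^k + Y^k = 1 = X^n + Y^n$; these lie on $F_{k,n}(X,Y) = 0$, giving at most $(p-1) N_{k,n}$ tuples. If $u = v = 0$, one counts $(x,y)$ with $x^k + y^k = 0 = x^n + y^n$; for $x \ne 0$, $t = y/x$ must satisfy $t^k = -1$ and $t^n = -1$, forcing $t^r = (-1)^\alpha (-1)^\beta = \pm 1$ by B\'ezout, so $t$ takes $O(s)$ values and the whole count is $O(sp)$. Combining all cases yields $T_{k,n} \ll sp\, N_{k,n} + sp$, and substituting the bound from Theorem~\ref{thm:Nkn} together with absorbing the $sp$ term into $sk\sqrt{n}p^2/r$ completes the argument.

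The main obstacle is the careful preimage count for $w$: one must confirm that the B\'ezout argument gives exactly the factor $s$ (and not more) in all regimes, and that the degenerate subcases where $A$ or $B$ vanishes contribute nothing extra beyond what is already reflected in $N_{k,n}$. Once this bookkeeping is in place, the rest is a direct substitution.
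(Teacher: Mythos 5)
Your argument is correct and follows essentially the same route as the paper: eliminate $u$ via the B\'ezout/gcd trick (each $(v,x,y)$ admits at most $O(s)$ values of $u$ since $u^r$ is determined when $u\ne 0$), rescale by $v$ to reduce the $v\ne 0$ count to $pN_{k,n}$, and invoke Theorem~\ref{thm:Nkn}. The only cosmetic difference is the treatment of $v=0$, where the paper applies the trivial degree bound $knp/r$ to the zero set of $(x^n+y^n)^{k/r}-(x^k+y^k)^{n/r}$ rather than your further split on $u$; in both versions that term is absorbed into $k\sqrt{n}sp^2/r$.
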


\begin{proof}  
Eliminating $u$ we obtain that $T_{k,n} \le s R_{k,n}$ where $R_{k,n}$ is   the number of solutions to the 
equation 
$$(x^n+y^n -v^n)^{k/r} = (x^k+y^k -v^k)^{n/r}.
$$
(as for any fixed $v,x,y$ the power $u^r$ is uniquely defined and so $u$ either $u=0$ or can take at most $s = \gcd(r,p-1)$
values).  

For $v=0$ there are at most $knp/r$ values for $(x,y) \in \F_p^2$.
If $v \ne 0$, then replacing $x\mapsto xv$,   $y\mapsto yv$, we obtain  $(x^n+y^n -1)^{k/r} = (x^k+y^k -1)^{n/r}.$.
Hence, by Theorem~\ref{thm:Nkn}, we have  
 \begin{align*}
T_{k,n} &  \le s R_{k,n} \le s\( knp/r + k\sqrt{n} p^2/r +   (kn/r)^{4/3}p^{5/3} \) \\
& \ll sknp/r +  k\sqrt{n}sp^2/r +  (kn/r)^{4/3} s p^{5/3}.
\end{align*}
Since $r \le k$ and $n \le p$,  we obtain
\begin{equation}
\label{eq:Tkn prelim}
T_{k,n} \ll  
 k\sqrt{n}sp^2/r +  (kn/r)^{4/3} s p^{5/3}
\end{equation}
and the result follows.
\end{proof}

Using the trivial bound $s \le r$ we can simplify Corollary~\ref{cor:Tkn} as 
$$
T_{k,n} \ll  
 k\sqrt{n}p^2 +  (kn)^{4/3} r^{-1/3}  p^{5/3}.
$$
 
 Furthermore,  let   $d$ be the largest 
divisor of $r$ with $\gcd(d,p-1)=1$. If we  set 
$$
k^* = k/d, \qquad n^*=n/d,   \qquad r^* = r/d
$$
then  $r^* = \gcd(k^*, n^*)$ and $\gcd(r^*,p-1) = \gcd(r,p-1) = s$.
Since  $\gcd(d,p-1)=1$ , we clearly have
$T_{k,n}  = T_{k^*,n^*}$. Thus, using~\eqref{eq:Tkn prelim} with $(k^*, n^*, r^*)$ in
place of $(k,n,r)$ we obtain 
 \begin{align*}
T_{k,n} & \ll   k^*\sqrt{n^*}sp^2/r^* +  (k^*n^*/r^*)^{4/3} s p^{5/3}\\
& = k\sqrt{nr^*}sp^2/(r^{3/2}) +  (knr^*/r^2)^{4/3} s p^{5/3}
\end{align*}
 provided that  $r^*  \le 0.5 \sqrt{n}$. Clearly that if $r$ is squarefree that $r^* = s$ in 
 which case we obtain yet another modification of Corollary~\ref{cor:Tkn}\:
 $$
 T_{k,n}   \ll     k\sqrt{n}s^{3/2} p^2r^{-3/2} +  (kn)^{4/3} s^{7/3}  p^{5/3}r^{-8/3}.
 $$

\section{Exponential sums with binomials}

\subsection{Preparations} 
The following relation between $M_{k,n}$ and $T_{k,n}$ has appeared implicitly in several previous 
works. For the sake of completeness we give a short proof. 

\begin{lem}
\label{lem:Mkn vs Tkn}
Let 
$1 \le k, n<p$ be distinct integers and let 
$$s=\gcd(k,n, p-1).
$$
 Then we have,
$$
M_{k,n}^4 \ll  s p  T_{k,n}. 
$$
\end{lem}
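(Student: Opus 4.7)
The plan is to combine a standard fourth-moment identity with the rescaling symmetry $x \mapsto tx$ of the inner variable. For any $t \in \F_p^*$, substituting $x \mapsto tx$ in the definition of $S_{k,n}(a,b)$ gives
$$
S_{k,n}(a,b) = S_{k,n}(at^k, bt^n),
$$
so $|S_{k,n}|$ is constant on orbits of the action $t \cdot (a,b) = (at^k, bt^n)$ of $\F_p^*$ on the set of pairs $(a,b)$ with $ab \neq 0$.

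Next I would compute the orbit size. The stabilizer of any $(a,b)$ with $ab \ne 0$ is the set of $t \in \F_p^*$ satisfying simultaneously $t^k = 1$ and $t^n = 1$, which (in the cyclic group $\F_p^*$) has exactly $s = \gcd(k,n,p-1)$ elements. Hence every orbit has cardinality $(p-1)/s$, and in particular, picking any maximizer $(a_0,b_0)$ attaining $M_{k,n}$, there are $(p-1)/s$ pairs $(a,b)$, all with $ab \neq 0$, along which $|S_{k,n}(a,b)| = M_{k,n}$.

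Then I would invoke the fourth-moment identity: expanding
$$
|S_{k,n}(a,b)|^4 = \sum_{x_1,x_2,y_1,y_2 \in \F_p} \ep\!\left(a\bigl(x_1^k + x_2^k - y_1^k - y_2^k\bigr) + b\bigl(x_1^n + x_2^n - y_1^n - y_2^n\bigr)\right)
$$
and summing over $(a,b) \in \F_p^2$, orthogonality of additive characters keeps only those tuples with $x_1^k + x_2^k = y_1^k + y_2^k$ and $x_1^n + x_2^n = y_1^n + y_2^n$, so
$$
\sum_{a,b \in \F_p} |S_{k,n}(a,b)|^4 = p^2\, T_{k,n}.
$$

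Finally I combine: restricting the last sum to the orbit of $(a_0,b_0)$ (a subset of the full range) gives
$$
\frac{p-1}{s}\, M_{k,n}^4 \;\le\; \sum_{(a,b) \in \mathrm{orbit}} |S_{k,n}(a,b)|^4 \;\le\; p^2\, T_{k,n},
$$
yielding $M_{k,n}^4 \le s p^2 T_{k,n}/(p-1) \ll s p\, T_{k,n}$, as desired. There is no real obstacle; the only subtlety is the bookkeeping of the factor $s$, which comes entirely from the orbit–stabilizer count in the second step.
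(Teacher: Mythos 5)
Your proposal is correct and follows essentially the same route as the paper: the rescaling $x\mapsto tx$ giving $S_{k,n}(a,b)=S_{k,n}(at^k,bt^n)$, the orbit count $(p-1)/s$ (which the paper asserts directly and you justify via the stabilizer $\{t: t^k=t^n=1\}$ of order $\gcd(k,n,p-1)$), and the fourth-moment identity $\sum_{a,b}|S_{k,n}(a,b)|^4=p^2T_{k,n}$. No issues.
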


\begin{proof} We fix some $a, b\in \F_p^*$. Clearly for any $z \in \F_p^*$ we have
$$
S_{k,n}(a,b) =  \sum_{x=0}^{p-1} \ep\(az^k x^k + bz^n x^n\) = S_{k,n}\(az,bz^n\).
$$
Since $a,b\in \F_p^*$, there are  $(p-1)/s$ pairs $\(az^k,bz^n\)$ that are pairwise distinct. Hence
$$
s^{-1} (p-1) \left|S_{k,n}(a,b) \right|^4 \le \sum_{\lambda, \mu \in \F_p} \left|S_{k,n}(\lambda, \mu) \right|^4 .
$$
By the orthogonality of exponential functions 
$$
\sum_{\lambda, \mu \in \F_p} \left|S_{k,n}(\lambda, \mu) \right|^4  =  p^2 T_n
$$
and the result follows. 
\end{proof}

\subsection{Bounds of exponential sums} 
Combining Corollary~\ref{cor:Tn} with Lemma~\ref{lem:Mkn vs Tkn} (used with $k=1$),  we immediately 
obtain:

\begin{thm}
\label{thm:Mn} 
For $1 \le n < p$, we have,
$$
M_n \ll p^{3/4}  +    n^{1/3}p^{2/3}. 
$$
\end{thm}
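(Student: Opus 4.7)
The plan is to combine the two ingredients immediately preceding the theorem: the upper bound for $T_n$ from Corollary~\ref{cor:Tn} and the moment inequality from Lemma~\ref{lem:Mkn vs Tkn}. First I would apply Lemma~\ref{lem:Mkn vs Tkn} in the special case $k=1$. Since $s = \gcd(1,n,p-1) = 1$ in this case, the lemma specialises to
$$
M_n^4 \ll p\, T_n.
$$

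Next I would insert the bound from Corollary~\ref{cor:Tn}, namely $T_n \ll p^2 + n^{4/3} p^{5/3}$, to obtain
$$
M_n^4 \ll p \bigl(p^2 + n^{4/3} p^{5/3}\bigr) = p^3 + n^{4/3} p^{8/3}.
$$

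Finally I would take fourth roots, using the elementary inequality $(A+B)^{1/4} \le A^{1/4} + B^{1/4}$ for non-negative reals $A,B$, to conclude $M_n \ll p^{3/4} + n^{1/3} p^{2/3}$, which is exactly the claimed bound.

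There is no genuine obstacle: all the substantive work has been carried out in Section~2 (the factorisation analysis of $F_n$ in Lemma~\ref{lem:deg-bound n}, followed by Theorem~\ref{thm:Nn} and Corollary~\ref{cor:Tn}) and in Lemma~\ref{lem:Mkn vs Tkn}. The proof of Theorem~\ref{thm:Mn} is therefore just a one-line assembly, and the only thing to be careful about is checking that $s=1$ when $k=1$ so that the moment bound loses no factor.
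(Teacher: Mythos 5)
Your proposal is correct and is exactly the paper's argument: the paper states that Theorem~\ref{thm:Mn} follows immediately by combining Corollary~\ref{cor:Tn} with Lemma~\ref{lem:Mkn vs Tkn} applied with $k=1$, which is precisely your one-line assembly (including the observation that $s=1$ in that case).
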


Using the bound~\eqref{eq:Aku} for $n > p^{2/5}$ and Theorem~\ref{thm:Mn} 
otherwise, we obtain

 \begin{cor}
\label{cor:Mn_p-1}
For any  $n \mid p-1$, we have 
$$
M_n \ll p^{4/5}. 
$$
\end{cor}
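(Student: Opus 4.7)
The plan is to split the range of $n$ at the balancing threshold $n_0 = p^{2/5}$ and apply a different bound in each range, exactly as the paper suggests.

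First I would use the hypothesis $n \mid p-1$ to conclude that $\gcd(n,p-1) = n$, so that the Akulinichev bound \eqref{eq:Aku} simplifies to $M_n \le p/\sqrt{n}$. When $n \ge p^{2/5}$, this directly yields
\[
M_n \le \frac{p}{\sqrt{n}} \le \frac{p}{p^{1/5}} = p^{4/5},
\]
which is the desired bound in this range.

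Second, for the remaining range $n < p^{2/5}$ I would invoke Theorem~\ref{thm:Mn}, which gives
\[
M_n \ll p^{3/4} + n^{1/3} p^{2/3}.
\]
Since $n < p^{2/5}$ implies $n^{1/3} < p^{2/15}$, the second term is bounded by $p^{2/15+2/3} = p^{4/5}$. The first term $p^{3/4}$ is trivially $\le p^{4/5}$, so $M_n \ll p^{4/5}$ in this range as well.

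Combining the two cases completes the proof. There is no real obstacle here; the verification that the threshold $p^{2/5}$ is the correct balance point between $p/\sqrt{n}$ and $n^{1/3}p^{2/3}$ is the only content, and it follows by solving $p/\sqrt{n} = n^{1/3}p^{2/3}$ which gives $n^{5/6} = p^{1/3}$, i.e.\ $n = p^{2/5}$, with common value $p^{4/5}$.
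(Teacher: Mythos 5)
Your proof is correct and is precisely the argument the paper intends: split at $n = p^{2/5}$, apply the Akulinichev bound \eqref{eq:Aku} (which becomes $p/\sqrt{n}$ since $n \mid p-1$) for large $n$, and Theorem~\ref{thm:Mn} for small $n$. The paper states this in one sentence; your write-up just fills in the routine verification.
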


Similarly, Combining Corollary~\ref{cor:Tkn} with Lemma~\ref{lem:Mkn vs Tkn} we derive:

\begin{thm}
\label{thm:Mkn} 
Let 
$1 \le k, n<p$  be distinct integers and let 
$$ r =\gcd(k,n)  \mand   s =\gcd(s,p-1). 
$$ 
Assume that $r  \le 0.5 \sqrt{n}$, then we have,
$$
M_{k,n}  \ll k^{1/4} n^{1/8} s^{1/2}p^{3/4}/r^{1/4} +  (kn/r)^{1/3} s^{1/2} p^{2/3}.
$$
\end{thm}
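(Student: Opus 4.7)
The plan is to combine the fourth-moment inequality of Lemma~\ref{lem:Mkn vs Tkn} with the explicit upper bound on $T_{k,n}$ provided by Corollary~\ref{cor:Tkn}. All the substantive analytic and geometric work — controlling the irreducible factors of $F_{k,n}$ via Lemma~\ref{lem:deg-bound kn}, invoking the Weil bound and the improved bound of~\cite{V} on each component, and applying the unit equation bound on $\cX$ — has already been packaged into these two results, so the remaining step is essentially algebraic bookkeeping.

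Concretely, I would first write $M_{k,n}^4 \ll s p\, T_{k,n}$ by Lemma~\ref{lem:Mkn vs Tkn}. Then, since the hypothesis $r \le \tfrac{1}{2}\sqrt{n}$ is exactly the one required by Corollary~\ref{cor:Tkn}, I would substitute
$$
T_{k,n} \ll k\sqrt{n}\, s p^2/r \;+\; (kn/r)^{4/3} s\, p^{5/3},
$$
which yields
$$
M_{k,n}^4 \ll k\sqrt{n}\, s^2 p^3/r \;+\; (kn/r)^{4/3} s^2 p^{8/3}.
$$
Finally, using the elementary inequality $(u+v)^{1/4} \le u^{1/4}+v^{1/4}$ for $u,v \ge 0$ and extracting fourth roots term by term, I would obtain
$$
M_{k,n} \ll k^{1/4} n^{1/8} s^{1/2} p^{3/4}/r^{1/4} \;+\; (kn/r)^{1/3} s^{1/2} p^{2/3},
$$
which is the claimed bound.

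There is no real obstacle here beyond arithmetic care with exponents; in particular one should note that the $\sqrt{n}$ appearing in the first term of Corollary~\ref{cor:Tkn} becomes $n^{1/8}$ (not $n^{1/4}$) after combining the factor of $p$ from Lemma~\ref{lem:Mkn vs Tkn} and taking a fourth root, and similarly that the $s$ factors combine into $s^{1/2}$ rather than $s^{3/4}$. Since this step is purely formal, the full proof can be compressed into a single invocation of the two preceding results followed by the indicated manipulations.
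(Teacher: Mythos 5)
Your proposal is correct and is exactly the paper's argument: the paper derives Theorem~\ref{thm:Mkn} precisely by combining Lemma~\ref{lem:Mkn vs Tkn} with Corollary~\ref{cor:Tkn} and taking fourth roots, and your exponent bookkeeping checks out. (The only oddity is the paper's own typo $s=\gcd(s,p-1)$, which should read $s=\gcd(r,p-1)$, consistent with Lemma~\ref{lem:Mkn vs Tkn}.)
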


Again,  using the trivial bound $s \le r$ we derive from Theorem~\ref{thm:Mkn}  that 
$$
M_{k,n}  \ll k^{1/4} n^{1/8} s^{1/4}p^{3/4}  +  (kn)^{1/3} s^{1/6} p^{2/3}.
$$

\section{Comments}

We note that  in Lemma~\ref{lem:deg-bound kn} 
regardless of whether $k < n$ or $ k> n$ both lower bounds can be
of use. However in other results, such as Theorems~\ref{thm:Nkn} and~\eqref{thm:Mkn}, 
without loss of generality we can assume that $k < n$.

A computer calculation for primes $p \le 67$ using Magma~\cite{Magma} verified that, except for $n=(p+1)/2$, 
the polynomials $F_n$, $2 \le n < p$,  have a unique irreducible factor in addition to the trivial linear factors
explicitly given in Lemma \ref{lem:deg-bound n}. For $n=(p+1)/2$, on the other hand, $F_n$ factors completely
into quadratic polynomials in addition to the trivial linear factors. The polynomials $F_{k, n}$,  $2 \le k<n<p \le 29$, however,
all have a unique irreducible factor in addition to the trivial linear factors
explicitly given in Lemma \ref{lem:deg-bound kn}.

We also remark that our approach applies to binomial Laurent polynomials, that is, when 
one of $k$ and $n$ is negative. 

Finally, we expect  that our method can give new results fo exponential sums with trinomials
and other sparse polynomials and Laurent polynomials, see~\cite{Mac,MSS}. 

\section*{Acknowledgements}

During the preparation of this work the first author was  supported   by the ARC Grants~DP170100786 and DP180100201. 
The second author would like to thank UNSW for the hospitality during which part of this work was done.

\bibliographystyle{amsalpha}

\end{document}